\def \RR {\mathbb R}
\def \EE {\mathbb E}
\def \PP {\mathbb P}
\def \eps {\varepsilon}
\newtheorem{theorem}{Theorem}
\newtheorem{lemma}[theorem]{Lemma}
\newtheorem{proposition}[theorem]{Proposition}
\newtheorem{corollary}[theorem]{Corollary}
\theoremstyle{definition}
\newtheorem{remark}[theorem]{Remark}
\def\myffrac#1#2 in #3{\raise 2.6pt\hbox{$#3 #1$}\mkern-1.5mu\raise 0.8pt\hbox{$
		#3/$}\mkern-1.1mu\lower 1.5pt\hbox{$#3 #2$}}
\def\qed{\hfill $\vcenter{\hrule height .3mm
		\hbox {\vrule width .3mm height 2.1mm \kern 2mm \vrule width .3mm
			height 2.1mm} \hrule height .3mm}$ \bigskip}
\begin{document}

\author{Bo'az Klartag\textsuperscript{1} and Sasha Sodin\textsuperscript{2}}
\footnotetext[1]{Department of Mathematics, Weizmann Institute of Science, Rehovot 76100, Israel.
Email: boaz.klartag@weizmann.ac.il.  Supported by a grant from the Israel Science Foundation (ISF).}
\footnotetext[2]{School of Mathematical Sciences,
Queen Mary University of London,
Mile End Road, London E1 4NS, UK. Email: a.sodin@qmul.ac.uk. Supported in part by a Royal Society Wolfson Research Merit Award (WM170012), and a Philip Leverhulme Prize of the Leverhulme Trust (PLP-2020-064).}

\title{Local tail bounds for polynomials on the discrete cube}
\date{}
\maketitle

\begin{abstract} Let $P$ be a polynomial of degree $d$ in independent Bernoulli random variables which has zero mean and unit variance. The Bonami hypercontractivity bound implies that the probability that $|P| > t$ decays exponentially in $t^{2/d}$. Confirming a conjecture of Keller and Klein, we prove a local version of this bound, providing an upper bound on the difference between the $e^{-r}$ and the $e^{-r-1}$ quantiles of $P$.
\end{abstract}

\subsubsection{} This note is concerned with concentration inequalities for polynomials on the discrete cube. Concentration inequalities, i.e. tail bounds on the distribution of functions on high-dimensional spaces belonging to certain classes, were put forth by Vitali Milman in the 1970-s and have since found numerous applications; see e.g.\  \cite{GM, O}  and references therein.

\medskip
Let $X_1,\ldots,X_n$ be independent, identically distributed symmetric Bernoulli variables, so that $X = (X_1,\ldots,X_n)$ is  distributed uniformly on the discrete cube $\{ -1, 1 \}^n$. The starting point for this work is the concentration inequality for polynomials in $X$ (see e.g.\ \cite[Theorem 9.23]{O}), which we now recall.
Let $d \geq 1$, and consider a polynomial of the form
\begin{equation}  P_d(x) = \sum_{\#(S) = d } a_S \cdot \left( \prod_{i \in S} x_i \right) \label{eq_1713} \end{equation}
where the sum runs over all subsets $S \subseteq \{ 1, \ldots, n \}$ of cardinality $d$, and the coefficients $(a_S)$ are arbitrary
real numbers. In other words, $P_d$ is a $d$-homogeneous, square-free polynomial in $\RR^n$.
The Bonami hypercontractivity theorem \cite[Chapter 9]{O} tells us that for any $1 < p \leq q$,
\begin{equation} \| P_d(X) \|_q \leq \left( \frac{q-1}{p-1} \right)^{d/2} \| P_d(X) \|_p.
\label{eq_1702} \end{equation}
A general polynomial $P$ of degree at most $d$ on $\{-1,1 \}^n$ takes the form
\begin{equation} P(x) = \sum_{k=0}^d P_k(x) \label{eq_215} \end{equation}
where $P_k$ is a $k$-homogeneous, square-free polynomial. Thanks to orthogonality relations we have
$$ \| P(X) \|_2^2 = \sum_{k=0}^d \| P_k(X) \|_2^2. $$
Hence, by the Bonami bound (\ref{eq_1702}) and the Cauchy-Schwarz inequality, for any polynomial $P$ of degree at most $d$ and any $q \geq  3$,
\begin{align} \nonumber \| P(X) \|_q & \leq \sum_{k=0}^d \| P_k(X) \|_q \leq \sum_{k=0}^d (q-1)^{k/2} \| P_k(X) \|_2 \leq \sqrt{ \sum_{k=0}^d (q-1)^k } \cdot \sqrt{ \sum_{k=0}^d \| P_k(X) \|_2^2 } \\ & \leq \sqrt{2} \cdot (q-1)^{d/2} \cdot \| P(X) \|_2 \leq \sqrt{2} q^{d/2} \|P(X)\|_2 . \label{eq_1727} \end{align}
For $r > 0$ (not necessarily integer), write $a_r$ for a $e^{-r}$-quantile of $P(X)$, i.e.\ a number satisfying
$$ \PP( P(X) \geq a_r ) \geq \frac{1}{e^r} \qquad \text{and also} \qquad \PP( P(X) \leq a_r ) \geq 1- \frac{1}{e^{r}}. $$
Assume the normalization $\| P(X) \|_2 = 1$. It follows from (\ref{eq_1727}) that if $q \geq 3$ then
$$ \frac{1}{e^r} \leq  \PP( P(X) \geq a_r ) \leq  \frac{\EE |P(X)|^q}{a_r^q} \leq  \left( \sqrt 2 \cdot \frac{q^{d/2}}{a_r} \right)^q. $$
Substituting $q = 2r/d$ (when $r \geq 3d/2$), we get
\begin{equation}  a_r \leq \sqrt 2 \cdot (2er/d)^{d/2} \leq (C r/d)^{d/2} \quad ( r \geq 3d/2),\label{eq_1703}
\end{equation}
with a universal constant $C =4$. Without assuming any normalisation, we obtain
\begin{equation}  a_r  - a_1 \leq  C^d \left(\frac r d + 1\right)^{d/2}  \|P(X)\|_2~.\label{eq_1703'}
\end{equation}
(with a different numerical constant $C>0$),
which is valid for all $r \geq 1$.

\medskip
The estimate   (\ref{eq_1703'}) is a a tail bound for the distribution of $P(X)$, i.e.\ concentration inequality. We refer to \cite{GM} and references therein for background on concentration inequalities, particularly, for polynomials, and to \cite{O} for applications of (\ref{eq_1703'}) .

\medskip
In some applications, it is important to have bounds on $a_s - a_r$ when $s \geq r$ are close to one another, e.g.\ $s = r+1$. Such bounds are called {\em local} tail bounds; see \cite{DL} and references therein. The following proposition, confirming a conjecture of Nathan Keller and Ohad Klein, provides a  local  version of (\ref{eq_1703'}). In the case $d=1$, it follows from the results in the aforementioned work \cite{DL}.

\begin{proposition} Let $P$ be a polynomial of degree at most $d$ on $\{ -1,1 \}^n$. Then for all $r \geq 1$,
\begin{equation}\label{eq_1703''}  a_{r+1} - a_r  \leq
C^d \left( \frac{r}{d}+1\right)^{\frac d 2 -1} \|P(X)\|_2  ~,\end{equation}
where $C>0$ is a universal constant.
 \label{prop_957}
\end{proposition}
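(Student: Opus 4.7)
By homogeneity, assume $\|P(X)\|_2 = 1$ and set $\lambda = a_{r+1} - a_r$. The plan is to compare a Paley--Zygmund lower bound on the $q$-th moment of $(P - a_r)_+$ with an upper bound extracted from the hypercontractive tail estimate, so as to capture the local spacing. The lower bound is immediate: on $\{P \geq a_{r+1}\}$ (probability $\geq e^{-r-1}$) one has $(P-a_r)_+ \geq \lambda$, hence $\EE(P-a_r)_+^q \geq \lambda^q e^{-r-1}$ for every $q \geq 1$. A naive upper bound via H\"older's inequality, using $\PP(P > a_r) \leq e^{-r}$ and (\ref{eq_1727}), only recovers the global bound~(\ref{eq_1703'}); the additional factor $(r/d+1)^{-1}$ must come from the \emph{decay} of $\PP(P > a_r + t)$ in $t$, not merely its value at $t = 0$.

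To capture this decay I would use the layer-cake identity
\[
\EE(P-a_r)_+^q \;=\; \int_0^\infty q\,t^{q-1}\,\PP(P > a_r + t)\,dt
\]
together with the hypercontractive tail bound $\PP(|P|>u) \leq \exp(-\psi(u))$, where $\psi(u) = c d (u/K)^{2/d}$ follows from (\ref{eq_1727}) by Markov and optimization in~$q$. For $d \leq 2$ the function $\psi$ is convex, and linearizing at $u = a_r$ gives $\PP(P > a_r + t) \leq e^{-\psi(a_r)}\,e^{-\psi'(a_r)t}$; integrating yields $\EE(P-a_r)_+^q \leq e^{-\psi(a_r)}\,q!/\psi'(a_r)^q$. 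Taking $q = 1$, and noting that when $\psi(a_r) \geq r$ the factor $e^{-\psi(a_r)}$ absorbs the $e^{-r-1}$ on the lower-bound side, this gives $\lambda \leq e/\psi'(a_r)$. Inserting the global bound $|a_r| \leq C^d(r/d+1)^{d/2}$ to evaluate $\psi'(a_r) \sim a_r^{2/d - 1}$ produces $\lambda \leq C^d (r/d+1)^{d/2 - 1}$, as desired. For $d > 2$, where $\psi$ is concave and the tangent at $a_r$ goes the wrong way, I would linearize at $a_{r+1}$ instead and use the global bound on $a_{r+1}$ itself to lower-bound $\psi'(a_{r+1})$; the outcome is of the same form.

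The main obstacle is the regime where the hypercontractive prediction is loose, i.e.\ $\psi(a_r) < r$, in which $e^{-\psi(a_r)}$ fails to cancel $e^{-r-1}$ and the comparison becomes vacuous. I would address this by splitting the layer-cake integral at the crossover point $s^{*} = K(r/d)^{d/2}$ (where $\psi(s^{*}) = r$): for $s \in [a_r, s^{*}]$ use the quantile bound $\PP(P > s) \leq e^{-r}$, which is then the tighter of the two; for $s \geq s^{*}$ use the hypercontractive decay and linearize at $s^{*}$. A careful accounting of the two pieces, combined with (\ref{eq_1703'}) applied at the shifted index $r+1$ to control $a_{r+1}$ directly when $a_r$ is very small, should close the argument uniformly. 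The most delicate step is the bookkeeping needed to absorb all the losses, including those from Stirling's approximation and from the $d$-dependent constants inside $K$ and $\psi'$, cleanly into the universal prefactor $C^d$ across the two regimes and for every value of $d$.
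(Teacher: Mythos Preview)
Your approach is genuinely different from the paper's: you try to extract the local bound from the hypercontractive tail estimate on $P$ alone, via layer--cake and linearisation of $\psi(u)=cd(u/K)^{2/d}$, whereas the paper never uses the tail of $P$ at all. Instead, it applies the log--Sobolev inequality to $h=(P-a_r)_+$, uses that $h$ is supported on a set of measure $\eps\in[e^{-r-1},e^{-r}]$ to gain a factor $|\log\eps|\sim r$ on the left, bounds $|\nabla h|\leq|\nabla P|$ on the right, and then --- crucially --- applies hypercontractivity to the degree-$(d-1)$ polynomial $\nabla P$ restricted to that set. The drop in degree is what produces the exponent $\frac d2-1$.

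Your argument has a real gap, precisely in the regime you flag as ``the main obstacle''. When $\psi(a_r)\ll r$ (equivalently, $a_r\ll s^*=K(r/d)^{d/2}$), the splitting you propose does not recover the missing factor $(r/d+1)^{-1}$. On $[0,s^*-a_r]$ the only available upper bound on $\PP(P>a_r+t)$ is the flat value $e^{-r}$, so that piece of the layer--cake integral already contributes $e^{-r}(s^*-a_r)^q$; with $q=1$ this gives $\lambda\lesssim s^*-a_r$, which is the global bound $(r/d)^{d/2}$ and nothing better. The second piece and the appeal to (\ref{eq_1703'}) at index $r+1$ do not help: (\ref{eq_1703'}) bounds $a_{r+1}-a_1$, not $a_{r+1}-a_r$, and $a_1\leq a_r$ gives no improvement. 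Concretely, take $d=2$ and $P(X)=X_1(X_2+\cdots+X_{n+1})/\sqrt n$ with $n$ large. Then $P$ is approximately standard Gaussian, so $a_r\asymp\sqrt{r}$, while $\psi$ is linear and $\psi(a_r)\asymp\sqrt{r}\ll r$. Your integral bound yields $\lambda\leq Cr$, whereas the proposition asserts $\lambda\leq C^2$ (a constant). The proposition is of course true here ($\lambda\asymp r^{-1/2}$), but your method does not prove it.

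The underlying reason is structural: the inequality $\PP(|P|>u)\leq e^{-\psi(u)}$ is the \emph{only} consequence of ``$P$ is a polynomial'' that your argument uses, and a random variable satisfying this tail bound together with $\PP(P>a_r)\leq e^{-r}$ can have $a_{r+1}-a_r$ as large as $s^*-a_r$. Some further input from the polynomial structure is required; the paper gets it from $\nabla P$.
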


\noindent Clearly, (\ref{eq_1703''}) implies (\ref{eq_1703'}). The estimate (\ref{eq_1703''}) gives the right magnitute of $a_r - a_{r+1}$, say, for
\begin{equation}\label{eq:pol-clt} P(X) = (X_1 + \cdots + X_n)^d~, \quad n \gg 1~.\end{equation}

\subsubsection{}
We now turn to the proof of Proposition~\ref{prop_957}. Write $\partial_i P$ for the partial derivative of $P$ with respect to the $i^{th}$ variable.
Thus $$ \partial_i P (x) = \frac{P(T_i^1 x) - P(T_i^{-1} x)}{2} \qquad \qquad \text{for} \ x \in \{-1,1 \}^n, $$ where $T_i^j$ is
the map that sets the $i^{th}$-coordinate of $x$ to the value $j$, and keeps the other coordinates intact.
Observe that $\partial_i P$ is a polynomial of degree at most $d-1$ if $P$ is of degree $d$. We denote by $\nabla P$ the vector function with coordinates $\partial_i P$. The first step in the proof of Proposition~\ref{prop_957} is to sharpen the quantile bound (\ref{eq_1703}).

\begin{lemma} Let $P$ be a polynomial of degree at most $d$ with $\EE |P(X)|^2 = 1$.
Then for any non-empty  subset $A \subseteq \{-1, 1 \}^n$ of relative size $\eps = \#(A) / 2^n$ we have
\begin{equation}  \frac{1}{\#(A)} \sum_{x \in A} |P(x)|^2 \leq C^d \cdot \max \left \{1, \left( \frac{ |\log \eps| }{d} \right)^{d} \right \}, \label{eq_955} \end{equation}
and
\begin{equation}  \frac{1}{\#(A)} \sum_{x \in A} |\nabla P(x)|^2 \leq
C^d \cdot \max \left \{1, \left( \frac{ |\log \eps| }{d} \right)^{d-1} \right \}~,\label{eq_1001}
\end{equation}
for a universal constant $C > 0$. \label{lem_955}
\end{lemma}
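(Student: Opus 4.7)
The plan is to derive both inequalities by combining Hölder's inequality with the hypercontractive bound \eqref{eq_1727}. For any function $f$ on $\{-1,1\}^n$ and any $q > 2$, Hölder's inequality with conjugate exponents $q/2$ and $q/(q-2)$ yields
\[
\frac{1}{\#(A)} \sum_{x \in A} f(x)^2 \;=\; \frac{1}{\eps}\, \EE\!\left[ f(X)^2\, \mathbf{1}_A(X) \right] \;\leq\; \|f(X)\|_q^2 \cdot \eps^{-2/q}.
\]
The bound \eqref{eq_955} would follow by taking $f = P$ and substituting $\|P(X)\|_q^2 \leq 2 q^d$ from \eqref{eq_1727}, valid for $q \geq 3$. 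Choosing $q = \max\{3,\, 2|\log\eps|/d\}$ balances $q^d$ against $\eps^{-2/q}$: in the regime $|\log\eps| \geq 3d/2$ one computes $\eps^{-2/q} = e^d$, yielding the bound $2(2e|\log\eps|/d)^d$; in the complementary regime $\eps^{-2/3} \leq e^d$, giving a bound of order $(3e)^d$. Both are of the claimed form.

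For \eqref{eq_1001}, I would apply the same estimate to each $\partial_i P$, a polynomial of degree at most $d-1$, and sum:
\[
\frac{1}{\#(A)} \sum_{x \in A} |\nabla P(x)|^2 \;\leq\; \eps^{-2/q} \sum_{i=1}^n \|\partial_i P(X)\|_q^2 \;\leq\; 2 q^{d-1}\, \eps^{-2/q} \sum_{i=1}^n \|\partial_i P(X)\|_2^2.
\]
The extra input is a Poincaré-type identity on the cube: using the decomposition \eqref{eq_215} and orthogonality of monomials, one checks that $\sum_i \|\partial_i P(X)\|_2^2 = \sum_k k \|P_k(X)\|_2^2 \leq d \|P(X)\|_2^2 = d$. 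Optimizing now with $q = \max\{3,\, 2|\log\eps|/(d-1)\}$ produces the bound $C^d (|\log\eps|/d)^{d-1}$, after using $(d/(d-1))^{d-1} \leq e$ to pass from $1/(d-1)$ to $1/d$. The case $d = 1$ is trivial since $\nabla P$ is then constant.

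I do not anticipate a real obstacle: the whole argument is a textbook combination of Hölder's inequality and Bonami hypercontractivity. The only bookkeeping point is that the Poincaré identity contributes a factor of $d$, and the optimization in the $\nabla P$ case contributes a further constant factor; both are absorbed into $C^d$ by the trivial $d \leq 2^d$.
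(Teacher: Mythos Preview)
Your proposal is correct and follows essentially the same route as the paper: H\"older's inequality combined with the hypercontractive moment bound \eqref{eq_1727}, optimised at $q \approx 2|\log\eps|/d$, together with the Poincar\'e-type identity $\sum_i \|\partial_i P(X)\|_2^2 = \sum_k k\,\|P_k(X)\|_2^2 \leq d$. The only cosmetic difference is that the paper first proves \eqref{eq_955} and then applies it to each $\partial_i P$ (degree $\leq d-1$) rather than rerunning the H\"older argument, but this amounts to the same computation.
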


\begin{proof} Let $q \geq 3$. By H\"older's inequality followed by an application of (\ref{eq_1727}),
\begin{align*}  \sum_{x \in A} |P(x)|^2 & \leq (\#(A))^{1 - 2/q} \cdot \left( \sum_{x \in A} |P(x)|^q \right)^{2/q} = (\#(A))^{1 - 2/q} \cdot 2^{2n/q} \cdot \| P(X) \|_q^2
\\ & \leq (\#(A))^{1 - 2/q} \cdot 2^{2n/q} \cdot 2 q^d~,\end{align*}
whence
\[ \frac{1}{\#(A)} \sum_{x \in A} |P(x)|^2 \leq 2\eps^{-2/q} q^d~.\]
The estimate (\ref{eq_955}) clearly holds for $\eps \geq e^{-\frac{3d}{2}}$, therefore we assume that $\eps < e^{-\frac{3d}{2}}$.
Set
\[ q =   2 |\log \eps| / d  \geq 3 \]
and obtain
$$ \frac{1}{\#(A)} \sum_{x \in A} |P(x)|^2 \leq  \left( \frac{C}{d} \right)^{d} |\log \eps|^d. $$
This proves (\ref{eq_955}).
Since $\partial_i P $ is a polynomial of degree at most $d-1$, from (\ref{eq_955}),
$$
\frac{1}{\#(A)} \sum_{x \in A} |(\partial_i P)(x)|^2 \leq
C^d \cdot \max \left \{1, \left( \frac{ |\log \eps| }{d} \right)^{d-1} \right \}
\cdot \EE |(\partial_i P)(X)|^2~,$$
whence
$$
\frac{1}{\#(A)} \sum_{x \in A} |(\nabla P)(x)|^2 \leq
C^d \cdot \max \left \{1, \left( \frac{ |\log \eps| }{d} \right)^{d-1} \right \}
\cdot \EE |(\nabla P)(X)|^2~.$$
We decompose $P(X) = \sum_{k=0}^d P_k(X)$ as in (\ref{eq_215}), and use
the orthogonality relations
$$ \EE |\nabla P(X)|^2 = \sum_{k=0}^d \EE |\nabla P_k(X)|^2 = \sum_{k=0}^d k \cdot \EE |P_k(X)|^2 \leq d \cdot \EE |P(X)|^2 = d. $$
This proves (\ref{eq_1001}).
\end{proof}

Note that for any $f: \{-1,1 \}^n \rightarrow \RR$,
\begin{equation} \sum_{x \in \{-1,1 \}^n} |\nabla f(x)|^2 \leq 2 \cdot \sum_{x \in \{-1,1 \}^n} |\nabla f(x)|^2 \cdot 1_{ \{ f(x) \neq 0 \} }.
\label{eq_1015} \end{equation}
Indeed, the expression on the left-hand side of (\ref{eq_1015}) is  the sum over all oriented edges $(x,y) \in E$ in the Hamming
cube of the squared difference $|f(x) - f(y)|^2 / 4 $. This is clearly at most twice the sum over all
oriented edges $(x,y) \in E$ of the quantity $|f(x) - f(y)|^2 \cdot 1_{ \{f(x) \neq 0 \}} / 4$.

\medskip
Recall the log-Sobolev inequality (e.g. \cite[Chapter 10]{O}) which states that for any function $f : \{-1,1 \}^n \rightarrow \RR$,
\begin{equation} \EE f^2(X) \log f^2(X) - \EE f^2(X) \cdot \log \EE f^2(X) \leq 2 \EE |\nabla f(X)|^2. \label{eq_1013} \end{equation}
Moreover, let $A \subseteq \{ -1,1 \}^n$ be a non-empty set and denote $\eps = \#(A) / 2^n$. If the function $f$
is supported in $A$ and is not identicaly zero, then denoting $g = f / \sqrt{ \EE f^2(X) }$,
\begin{equation} \begin{split} \EE f^2(X) \log f^2(X) - \EE f^2(X) \cdot \log \EE f^2(X)  &= \EE f^2(X) \cdot \EE g^2(X) \log g^2(X) \\
&\geq \EE f^2(X) \cdot |\log \eps|, \label{eq_1049} \end{split}\end{equation}
because $g^2$ is supported in $A$, and  among all probability distributions supported in $A$,
the maximal entropy is attained for the uniform distribution.

\begin{proof}[Proof of Proposition \ref{prop_957}] Without loss of generality $\|P(X)\|_2 = 1$. We may assume that $a_{r+1} > a_r$, as otherwise there is nothing to prove.
Let $U = \{ x \in \{ -1, 1 \}^n \, ; \, f(x) > a_r \}$
and set $\eps = \#(U) / 2^n$. Then $e^{-(r+1)} \leq \eps \leq e^{-r}$, by the definition of the quantiles
$a_r$ and $a_{r+1}$. Denote $\chi(t) = \max(t - a_r, 0)$; this is a $1$-Lipschitz function on the real line.
Applying the log-Sobolev inequality (\ref{eq_1013}) to the function $h = \chi \circ P : \{-1,1\}^n \rightarrow \RR$ we get
\begin{equation} \EE h^2(X) \log h^2(X) - \EE h^2(X) \cdot \log \EE h^2(X) \leq 2 \EE |\nabla h|^2(X). \label{eq_238} \end{equation}
Since $h$ is supported in $U$, with $\eps = \#(U) / 2^n$, by (\ref{eq_1049}) and (\ref{eq_238}),
$$ \EE h^2(X) \cdot |\log \eps| \leq 2 \EE |\nabla h|^2(X) \leq 4 \EE |\nabla h(X)|^2 \cdot 1_{\{ h(X) > 0 \}}. $$
The last passage is the content of (\ref{eq_1015}).
Since $\chi$ is $1$-Lipschitz, we know that $|\nabla h|^2 \leq |\nabla P|^2$.
Hence, by (\ref{eq_1001}),
$$ \EE |\nabla h(X)|^2 \cdot 1_{\{ h(X) > 0 \}} \leq \EE |\nabla P(X)|^2 1_{\{ X \in U \}} \leq \eps \cdot C^d \cdot \max \left \{1, \left( \frac{ |\log \eps| }{d} \right)^{d-1} \right \}. $$
To summarize,
\begin{equation}  \EE h^2(X) \cdot |\log \eps| \leq
\eps \cdot C_1^d \cdot \max \left \{1, \left( \frac{ |\log \eps| }{d} \right)^{d-1} \right \}, \label{eq_245} \end{equation}
for a universal constant $C_1> 0$.
 Recall that $e^{-(r+1)} \leq \eps \leq e^{-r}$.
By the definition of $a_{r+1}$, we know that $h(X) \geq a_{r+1} - a_r$ with probability at least $e^{-(r+1)}$.
Therefore, from (\ref{eq_245}),
$$ e^{-(r+1)} \cdot (a_{r+1} - a_r)^2 \cdot \frac{r}{2} \leq e^{-r} \cdot C_1^d \cdot \max \left \{1, \left( \frac{2 r}{d} \right)^{d-1} \right \} $$
or
\[  a_{r+1} - a_r  \leq  C_2^d \cdot \max \left \{ \frac{1}{\sqrt{r}} , \left( \frac{ r }{d } \right)^{d/2-1} \right \} \leq C_3^d \left( \frac r d + 1 \right)^{\frac d 2}~.  \tag*{\qedhere} \]
\end{proof}

\subsubsection{}
We remark that   Proposition \ref{prop_957} implies the following corollary which holds true without the normalization by $\|P(X)\|_2$.

\begin{corollary}\label{cor} There exists $C > 0$ such that the following holds. Let $P$ be a polynomial of degree at most $d$ with $\mathbb E P(X) = 0$.  Then
for $r \geq Cd$, 
\begin{equation}\label{eq:concl-cor'}
a_{r+1} \leq  a_r \left[ 1 + C^d \left( \frac r d + 1 \right)^{\frac{d}{2}-1} \right]. 
\end{equation}
\end{corollary}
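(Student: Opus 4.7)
The plan is to deduce Corollary~\ref{cor} from Proposition~\ref{prop_957} by showing that, when $\EE P(X) = 0$ and $r \geq c_0 d$ for a suitable universal constant $c_0 > 0$, the quantile $a_r$ is itself comparable to $\|P(X)\|_2$, up to an exponential-in-$d$ factor. We may assume $\|P(X)\|_2 > 0$ (otherwise $P \equiv 0$ and there is nothing to prove) and, by homogeneity, that $\|P(X)\|_2 = 1$. It then suffices to produce universal constants $c_0, K > 0$ for which $a_r \geq K^{-d}$ whenever $r \geq c_0 d$: granted this, Proposition~\ref{prop_957} gives
$$a_{r+1} - a_r \leq C_1^d \left(\tfrac{r}{d}+1\right)^{d/2-1} \leq (C_1 K)^d \left(\tfrac{r}{d}+1\right)^{d/2-1} a_r,$$
which rearranges to the desired inequality.

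To obtain the lower bound on $a_r$, I first establish the reverse Khintchine-type estimate $\|P(X)\|_1 \geq c_1^d$ for some universal $c_1 > 0$. This follows by combining the H\"older interpolation $\|P(X)\|_2^3 \leq \|P(X)\|_1 \cdot \|P(X)\|_4^2$ (the exponents $(2/3, 4/3)$ on $(\|P\|_1, \|P\|_4)$ coming from three-parameter H\"older) with the hypercontractive bound $\|P(X)\|_4 \leq \sqrt{2}\cdot 2^d$ recorded in~(\ref{eq_1727}) applied with $q = 4$.

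Since $\EE P(X) = 0$, the positive and negative parts of $P(X)$ have equal expectation, so $\mu := \EE \max(P(X), 0) = \|P(X)\|_1/2 \geq c_1^d/2$. Splitting the defining expectation of $\mu$ according to whether $P(X) \leq \mu/2$ yields
$$\EE\bigl[\max(P(X),0) \cdot 1_{\{P(X) > \mu/2\}}\bigr] \geq \mu/2,$$
and Cauchy--Schwarz (using $\|P(X)\|_2 = 1$) converts this into $\PP(P(X) > \mu/2) \geq \mu^2/4 \geq c_2^d$ for some universal $c_2 > 0$. Choosing $c_0 > 0$ so that $e^{-c_0 d} \leq c_2^d$ for every $d \geq 1$, it follows that $a_r \geq \mu/2 \geq c_1^d/4$ whenever $r \geq c_0 d$, giving the required lower bound with $K = 4/c_1$.

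The only step needing genuine care is the reverse Khintchine-type inequality in the second paragraph; the rest is routine bookkeeping, and I do not anticipate further obstacles.
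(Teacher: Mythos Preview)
Your proof is correct and follows the same overall strategy as the paper: reduce the corollary to Proposition~\ref{prop_957} by showing $a_r \geq C^{-d}\|P(X)\|_2$ for $r \geq c_0 d$, and obtain this lower bound via hypercontractivity combined with a second-moment (Paley--Zygmund type) argument. The execution differs slightly: the paper first shows $\sigma_+ := \|P(X)_+\|_2 \geq C^{-d}\sigma$ by a case split, then applies Paley--Zygmund to $P(X)_+^2$ using the fourth-moment bound $\EE P_+^4 \leq C^d \sigma_+^4$; you instead establish the reverse Khintchine bound $\|P\|_1 \geq c^d\|P\|_2$ via $L^1$--$L^2$--$L^4$ interpolation, set $\mu = \EE P_+ = \tfrac12\|P\|_1$, and use a split-and-Cauchy--Schwarz argument (which is Paley--Zygmund for $P_+$ using first and second moments). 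Your route is a bit more direct and avoids the $\sigma_\pm$ case analysis. Two cosmetic points: the parenthetical ``exponents $(2/3,4/3)$'' should read $(1/3,2/3)$ (before cubing); and to make the quantile conclusion clean you want the strict inequality $e^{-c_0 d} < c_2^d$, which is ensured by taking $c_0 > -\log c_2$.
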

\begin{remark}
We conjecture that  (\ref{eq:concl-cor'}) also holds with the power $-1$ in place of $\frac d2 - 1$. Such an estimate would give the right order of magnitude for the polynomial (\ref{eq:pol-clt}).
\end{remark}

\begin{proof}[Proof of Corollary~\ref{cor}]
Write $\sigma^2 = \mathbb E |P(X)|^2$. We shall prove that $\sigma \leq C_1^d a_r$. Once this inequality is established, we deduce   from Proposition \ref{prop_957}  that
$$ \frac{a_{r+1} - a_r}{\sigma} \leq C^d  \left( \frac r d + 1 \right)^{\frac{d}{2}-1}~, $$
whence
$$ a_{r+1} \leq a_r + \sigma \cdot C^d  \left( \frac r d + 1 \right)^{\frac{d}{2}-1}
\leq  a_r \left(1 + (CC_1)^d \left( \frac r d + 1 \right)^{\frac{d}{2}-1} \right)~,$$
as claimed.

Let $\sigma_{\pm} = \sqrt{ \mathbb E (P(X)_\pm)^2 }$. First, we claim that $\sigma_+ \geq C_2^{-d} \sigma$. Indeed, if $\sigma_+ \geq \sigma_-$
then $\sigma_+ \geq \sigma / \sqrt{2}$. If $\sigma_+ < \sigma_-$, then, using (\ref{eq_1727}),
\[\begin{split}
\sigma_+ &\geq \mathbb E P(X)_+ = \mathbb E P(X)_- \geq \frac{(\mathbb E P(X)_-^2)^{3/2}}{(\mathbb E P(X)_-^4)^{1/2}} \\
&\geq \frac{\sigma_-^3}{2 \cdot 3^d \cdot (\sigma_+^2 + \sigma_-^2)} \geq \frac{1}{4 \cdot 3^d} \sigma_-~.
\end{split}\]
Second, another application of (\ref{eq_1727}) yields
\[ \mathbb E P(X)_+^4 \leq \mathbb E P(X)^4 \leq 4 \cdot 3^d  \sigma^4 \leq C_3^d \sigma_+^4~, \]
thus by the Paley--Zygmund inequality
\[ e^{-Cd} \geq e^{-r} \geq \mathbb P \left\{ P(X) > a_r  \right\} \geq \frac{(1 - a_r^2 / \sigma_+^2)_+^2}{C_3^d}~,\]
whence $\sigma_+ \leq 2 a_r$ if we ensure that, say,  $e^C \geq 2 C_3$. This concludes the proof.
\end{proof}

Finally, we remark that both  Proposition \ref{prop_957} and Corollary \ref{cor} can be generalised in several directions. For example, instead of the Hamming cube, one can consider a general measure which is invariant under a Markov diffusion satisfying the Bakry--\'Emery CD($R, \infty)$ condition; in this setting, linear combinations of eigenfunctions of the generator play the r\^ole of polynomials. The proof requires only notational modifications.

\paragraph{Acknowledgement.} We are grateful to Nathan Keller for helpful correspondence.


\begin{thebibliography}{99}

\bibitem{DL} Devroye, L., Lugosi, G., {\it Local tail bounds for functions of independent random variables.}   Ann.\ Probab.~36 (2008), no.~1, 143--159.

\bibitem{GM} Giannopoulos, A.\ A., Milman, V.\ D.,
{\em Concentration property on probability spaces.}
Adv.\ Math.\ 156 (2000), no.~1, 77--106.

\bibitem{O} O'Donnell, R., {\it Analysis of Boolean Functions}.
Cambridge University Press, New York, 2014.



\end{thebibliography}
\end{document}